\newtheorem{Theorem}{Theorem}[section]
\newtheorem{Corollary}[Theorem]{Corollary}
\theoremstyle{definition}
\newtheorem{Definition}{Definition}
\theoremstyle{remark}
\newtheorem{Remark}[Theorem]{Remark} 
\numberwithin{equation}{section}
\newcommand{\R}{\mathbb R}
\newcommand{\C}{\mathbb C}
\newcommand{\D}{\mathbb D}
\newcommand{\ISU}{{\rm SU}_{1, 1}}
\newcommand{\isu}{\mathfrak{su}_{1, 1}}
\newcommand{\ad}{\operatorname{Ad}}
\newcommand{\Nil}{{\rm Nil}_3}
\newcommand{\Mi}{\mathbb E^3_1}
\renewcommand{\Re}{\operatorname {Re}}
\renewcommand{\Im}{\operatorname {Im}}
\renewcommand{\l}{\lambda}
\newcommand{\be}{\begin{equation*}}
\newcommand{\ee}{\end{equation*}}
\begin{document}
\title{A duality for minimal surfaces in the  Heisenberg group}
 \author[S.-P.~Kobayashi]{Shimpei Kobayashi}
 \address{Department of Mathematics, Hokkaido University, 
 Sapporo, 060-0810, Japan}
 \email{shimpei@math.sci.hokudai.ac.jp}
 \thanks{The author is partially supported by JSPS KAKENHI Grant Number JP18K03265,  
 JP22K03304 and 
 Foreign Expert Key
 Support Program (Northeast Special) Grants-D20240219.}
 \subjclass[2020]{Primary~53A10, 58D10, Secondary~53C42}
 \keywords{Minimal surfaces; Heisenberg group; 
 spinors; isothermic surfaces; dual surfaces}
\date{\today}
\pagestyle{plain}
\begin{abstract} We introduce and study the notion of a transformation surface associated with a 
nowhere-vertical minimal surface in the three-dimensional Heisenberg group, and prove its minimality and duality. Furthermore, by using the logarithmic derivative of the moving frame with respect to the spectral parameter, we derive the Sym formula for the dual minimal surface. 
\end{abstract}
\maketitle
\section*{Introduction}
A surface $f$ in the Euclidean 3-space $\R^3$ is said to be \textit{isothermic} if it admits conformal curvature line coordinates away from umbilic points. Such surfaces are classically characterized 
by the \textit{dual transformation}, also known as the \textit{Christoffel transformation}:
 namely, there exists another surface $f^*$ satisfying $df^* \wedge df = 0$ \cite[Definition 3.1]{KPP}.  
 Canonical examples of isothermic surfaces include 
 quadrics, surfaces of revolution, constant mean curvature surfaces, and 
 Bonnet surfaces.
 The  Christoffel dual surface $f^*$ is unique up to homothety, is itself isothermic, and satisfies the duality relation    $f^{**} = f$.

 In contrast, the notion of duality is far less understood in the setting of general Riemannian manifolds. It is therefore a natural and fundamental problem to identify geometrically significant classes of surfaces in specific Riemannian manifolds that admit a meaningful notion of duality.
 
 We address this problem in the context of the three-dimensional Heisenberg group
 $\Nil$, which carries a canonical left-invariant Riemannian metric and plays a central role in the geometry of three-dimensional Thurston geometries.
 We introduce and study the notion of a transformation surface associated with a nowhere-vertical minimal surface in $\Nil$, demonstrating its  minimality and  natural duality.
 
  The underlying idea is inspired by the correspondence between surfaces of nonzero constant mean curvature (CMC) in the Minkowski space $\R^{2,1}$ and  minimal surfaces in $\Nil$,
 established via their normal Gauss maps \cite{Figueroa}. 
 Since CMC surfaces in $\R^{2,1}$ admit duals, this correspondence
 naturally suggests a similar duality for minimal surfaces in $\Nil$. 
  
  Our construction of the dual surface in $\Nil$ begins with generating spinors associated with 
   a given nowhere-vertical minimal surface $f$ as developed in [6, 3] and reviewed in Section \ref{sc:Pr}.
 Then, by appropriately defining a pair of dual generating spinors, we construct another minimal surface
  $f^*$ in $\Nil$ and show that the duality relation $f^{**}=f$ holds (Theorem \ref{thm:dualmini}).
 In addition, following the approach in \cite{DIK:mini}, minimal surfaces in $\Nil$
 can be represented using the loop group method. We derive a representation formula for the dual minimal surface via the logarithmic derivative of the extended frame with respect to the spectral parameter, the so-called Sym formula (Theorem \ref{thm:Symdual}). We also present several explicit examples.

 This constitutes the first systematic construction of a duality for minimal surfaces in $\Nil$, developed via generating spinors and the loop group method.
 
 \section{Preliminaries}\label{sc:Pr}
 In this section, we briefly recall the necessary facts from \cite{DIK:mini}.
 We recall generating spinors of nowhere-vertical minimal surfaces in $\Nil$
 and a characterization of minimal surfaces by the harmonicity of the normal Gauss maps.

 The three-dimensional Heisenberg group $\Nil= (\R^3(x_1, x_2, x_3), \cdot)$ is 
 equipped  with the multiplication  $(x_1, x_2, x_3)\cdot (\tilde x_1, \tilde x_2, \tilde x_3):=
 (x_1+ \tilde x_1, x_2+ \tilde x_2, x_3 + \tilde x_3 + \frac12(x_1 \tilde x_2 - \tilde x_1 x_2))$
 and the left-invariant Riemannian metric $ds^2 = dx_1^2 + dx_2^2 + (dx_3 + \frac12 (x_2 dx_1 - x_1 dx_2))^2$. 
\subsection{Generating spinors for surfaces in $\Nil$}\label{sbsc:gspinors}
 Let us take an orthonormal basis 
 of the Lie algebra $\mathfrak{nil}_3$ of $\Nil$ by $\{e_1, e_2, e_3\}$. 
 Let $f: M \to \Nil$ be a conformal immersion from a Riemann surface $M$, then 
 the Maurer-Cartan form $f^{-1} d f$ can be 
 expanded as $f^{-1} d f = (f^{-1} f_z) dz + (f^{-1} f_{\bar z}) d \bar z$
 with $ f^{-1} f_z = \sum_{k =1}^3 \phi_k e_k$
 and $f^{-1} f_{\bar z} =\overline{f^{-1} f_z}  = \sum_{k =1}^3 \bar \phi_k e_k$.
 Here $(z= x + i y)$ are conformal coordinates, $\bar z= x - i y$ is its complex 
 conjugate, and the subscripts $z$ and $\bar z$ denote
 the partial differentiations with respect to $z$ and $\bar z$, respectively. 
 Since $f$ is a conformal immersion, $\phi_k (k =1, 2, 3)$ satisfies 
 \[
 \sum_{k=1}^3 \phi_k^2 =0\quad\mbox{and}\quad 
 \sum_{k =1}^{3}|\phi_k|^2 = \frac{1}{2}
 e^u\neq 0.
 \] 
 We note that the induced metric of $f$ is given by $ds^2 = e^u dz d \bar z$.
 Then using the {\it generating spinors} $\psi_1$ and $\psi_2$, the first equation 
 can be solved by
 \be
 \phi_1 = (\overline{\psi_2})^2 - \psi_1^2, \;\; 
 \phi_2 = i ((\overline{\psi_2})^2 + \psi_1^2), \;\; 
 \phi_3 = 2 \psi_1\overline{\psi_2}.
 \ee
 Let $N$ be the positively oriented unit normal vector field along $f$ and 
 denote  an unnormalized normal vector field $L$ by $L = e^{u/2} N$. We 
 define the support $h (dz)^{1/2} (d\bar z)^{1/2}$ by $h = \langle f^{-1} L, e_3\rangle$. 
 Then it is easy to compute $e^u$ and $h$ by the generating spinors $\psi_1$ and $\psi_2:$
 \begin{equation}\label{eq:uandh}
 e^u = 4 (|\psi_1|^2 + |\psi_2|^2)^2, \quad h = 2 (|\psi_1|^2-|\psi_2|^2).
 \end{equation}
 Moreover, let $e^{w/2}$ and $Q \,dz^2= 4 B\, dz^2$ be the Dirac potential and
 the Abresch-Rosenberg differential \cite{Abresch-Rosenberg}, given by 
\begin{equation}\label{eq:potandB}
 e^{w/2} = \mathcal U = \mathcal V = -\frac{H}{2}e^{u/2} + \frac{i}{4} h, 
 \quad   B = \frac{2 H + i}{4} \left(\langle f_{zz}, N \rangle + \frac{\phi_3^2}{2 H + i}\right).
\end{equation}
 It is known that the vector of  generating spinors $\tilde \psi = (\psi_1, \psi_2)$ 
 satisfies the so-called ``linear spinor system'', see \cite[Section 3]{DIK:mini} for details$:$
\begin{equation}\label{eq:linspinor}
 \tilde \psi_z = \tilde \psi \tilde U , \;\;
 \tilde \psi_{\bar z} =  \tilde \psi \tilde V, \;\;
\end{equation}
 where 
 \be 
 \tilde U =
 \begin{pmatrix}
 \frac{1}{2} w_z  + \frac{1}{2} H_{z} e^{-w/2+u/2}&
 - e^{w/2} \\ 
 B e^{-w/2} & 0
 \end{pmatrix}, \;\;
 \tilde V =
 \begin{pmatrix}
 0 & - \bar B e^{-w/2}\\
 e^{w/2} & \frac{1}{2}w_{\bar z}+\frac{1}{2} H_{\bar z}e^{- w/2 + u/2}
 \end{pmatrix}.
\ee
 We note that the second column of the first equation and 
 the first column of the second equation together 
 are the nonlinear Dirac equations, 
 that is,
 \begin{equation}\label{Dirac1}
\slashed{D} \begin{pmatrix} 
\psi_1
\\ 
\psi_2
\end{pmatrix} 
:=
\begin{pmatrix}
\partial_{z}\psi_{2}+\mathcal{U}\psi_1
\\
-\partial_{\bar z}\psi_1+\mathcal{V}\psi_2
\end{pmatrix} 
=
\left(
\begin{array}{c}
0
\\
0
\end{array}
\right), \quad  \mathcal U = \mathcal V = e^{w/2}.
\end{equation}

\subsection{Normal Gauss maps and minimal surfaces}
 We now identify the Lie algebra $\mathfrak{nil}_3$ with the 
 Euclidean $3$-space $\mathbb E^3$ via the natural basis $\{e_1, e_2,e_3\}$.
 Under this identification, 
 the map $f^{-1} N$ can be considered as a map into the 
 unit two-sphere $\mathbb S^2\subset \mathfrak{nil}_3$. 
 We now consider the \textit{normal Gauss map} $g$ of the surface $f$
 in $\Nil$, \cite{Figueroa}:
 The map $g$ is defined as the composition of the stereographic 
 projection $\pi$ from the south pole with
 $f^{-1} N$,  that is, $g = \pi \circ f^{-1} N: \D \to \C \cup \{\infty\}$
 and thus, applying the stereographic projection to  $f^{-1} N$, we obtain
 \begin{equation}\label{eq:Normal}
 g= \frac{\psi_2}{\overline{\psi_1}} \quad\mbox{and}\quad
 f^{-1}N=\frac{1}{1+|g|^2}
 \left(
 2\Re (g) e_1+2\Im (g) e_2+(1-|g|^2)e_3
 \right).
 \end{equation}
 The formula \eqref{eq:Normal} implies that 
 $f$ is nowhere-vertical if and only if $|g|<1$ or $|g|>1$.
 
 From now on we assume that the surface $f$ is nowhere-vertical and the unit normal  $f^{-1} N$ is upward, that is, 
 the $e_3$-component of $f^{-1} N$ is positive. Then the normal Gauss map 
 satisfies $|g|<1$, that is, it  takes values in the unit disk 
 $\mathbb D \subset \mathbb C$. Introducing the Poincar\'e metric on $\mathbb D$, 
 we consider $\mathbb D$ as the hyperbolic two-space $\mathbb H^2$ (the so-called Poincar\'e
 disk model).
\begin{Theorem}[Theorem 7 in \cite{Figueroa}]\label{thm:mincharact}
 Let $f : \D \to \Nil$ be a nowhere-vertical conformal immersion such that the normal 
 Gauss map $g$  satisfies $|g|<1$. Then the following statements are equivalent{\rm:}
 \begin{enumerate}
 \item $f$ is a minimal surface.
 \item The normal Gauss map $g$ for $f$ is a nowhere-holomorphic harmonic 
       map into the hyperbolic two-space $\mathbb H^2$.
 \end{enumerate}
 \end{Theorem}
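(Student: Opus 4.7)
The strategy is to translate the minimality condition $H=0$ into the harmonic map equation for $g$ via the spinor framework of Section \ref{sbsc:gspinors}. The Poincar\'e disk $\mathbb H^2$ carries the metric $\frac{4\,dg\,d\bar g}{(1-|g|^2)^2}$, whose tension field reads
\begin{equation*}
\tau(g)=g_{z\bar z}+\frac{2\bar g}{1-|g|^2}\,g_z g_{\bar z},
\end{equation*}
so harmonicity of $g$ is precisely $\tau(g)=0$. The aim is to show this is equivalent to $H=0$, modulo the nowhere-holomorphic proviso $g_{\bar z}\not\equiv 0$.

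I would first compute $g_z$, $g_{\bar z}$, and $g_{z\bar z}$ from $g=\psi_2/\overline{\psi_1}$ using \eqref{eq:linspinor}. The Dirac equations \eqref{Dirac1} supply the clean relations $\psi_{2,z}=-e^{w/2}\psi_1$ and $\psi_{1,\bar z}=e^{w/2}\psi_2$, whose conjugates handle $(\overline{\psi_1})_z$ and $(\overline{\psi_2})_{\bar z}$. The diagonal entries of $\tilde U,\tilde V$ feed $B,\bar B$ together with $w_z, w_{\bar z}$ and $H_z, H_{\bar z}$ into $\psi_{1,z}$ and $\psi_{2,\bar z}$. Substituting these into $\tau(g)$, and using \eqref{eq:uandh} to rewrite $1-|g|^2=(|\psi_1|^2-|\psi_2|^2)/|\psi_1|^2$, yields a rational expression in the spinors and the geometric data $H, H_z, H_{\bar z}, B, \bar B$.

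The decisive step is to eliminate $e^{w/2}$ via \eqref{eq:potandB}, $e^{w/2}=-\tfrac{H}{2}e^{u/2}+\tfrac{i}{4}h$ with $h=2(|\psi_1|^2-|\psi_2|^2)$. After collection, $\tau(g)$ is expected to factor as a nowhere-vanishing spinorial quantity times $H$ (together with contributions linear in $H_z, H_{\bar z}$), so that $\tau(g)=0 \iff H=0$. This algebraic bookkeeping is the main technical obstacle. A useful consistency check comes from the minimal case itself: substituting $H=0$ directly in the Dirac equations yields, up to constants, $g_z \propto h^2/\overline{\psi_1}^{\,2}$ and $g_{\bar z}\propto \bar B/\overline{\psi_1}^{\,2}$, and one can verify $\tau(g)$ vanishes by combining these with $\bar g = \overline{\psi_2}/\psi_1$ and $1-|g|^2 = h/(2|\psi_1|^2)$; hence any residual obstruction in the general computation must be driven by $H$ rather than by $B$.

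Finally, the nowhere-holomorphic condition: the relation $g_{\bar z}\propto \bar B/\overline{\psi_1}^{\,2}$ valid for minimal $f$ shows that $g$ is nowhere-holomorphic precisely when the Abresch-Rosenberg differential $B\,dz^2$ is nowhere-vanishing, which is the generic situation. The hypothesis is essential in the reverse direction of the theorem, since any holomorphic $g$ satisfies $\tau(g)=0$ trivially and imposes no constraint on $H$; requiring $g_{\bar z}\neq 0$ is what allows one to invert the algebraic identity and deduce $H=0$ from $\tau(g)=0$.
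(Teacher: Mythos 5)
First, note that the paper does not prove this statement at all: it is imported verbatim as Theorem~7 of \cite{Figueroa}, so there is no in-paper argument to compare yours against. Your strategy --- express the tension field of $g=\psi_2/\overline{\psi_1}$ for the Poincar\'e metric and show it is obstructed exactly by $H$ --- is a reasonable one and is consistent with the spinor formalism of \cite{DIK:mini}. Your consistency check in the minimal case is correct: with $H=0$ one has $e^{w/2}=\tfrac{i}{4}h$ purely imaginary, and the Dirac equations do give $g_z=-\tfrac{i}{8}h^2/\overline{\psi_1}^{\,2}$ and $g_{\bar z}=2i\bar B/\overline{\psi_1}^{\,2}$, from which $\tau(g)=0$ can be verified directly.

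However, the proposal has a genuine gap: the entire content of the theorem is the claim that, for general $H$, the quantity $\tau(g)$ factors as (nonvanishing spinorial expression)$\,\times\,H$ plus terms in $H_z,H_{\bar z}$, and you explicitly defer this (``is expected to factor \dots the main technical obstacle''). Without that computation neither implication is established; moreover, for the direction $(2)\Rightarrow(1)$ the resulting identity is a priori a first-order relation in $H$, $H_z$, $H_{\bar z}$, not a pointwise algebraic one, so you would still need to argue that it forces $H\equiv 0$ (e.g.\ by isolating a nonvanishing coefficient of $H$ itself). There is also an inconsistency in your treatment of the ``nowhere-holomorphic'' clause: your own formula $g_{\bar z}\propto \bar B/\overline{\psi_1}^{\,2}$ shows that for a minimal surface with $B\not\equiv 0$ the derivative $g_{\bar z}$ vanishes at the (isolated) zeros of $B$, so ``$g$ nowhere-holomorphic'' cannot mean ``$g_{\bar z}\neq 0$ everywhere'' as your last paragraph suggests; it must be read as ``$g$ is holomorphic on no open set,'' equivalently $B\not\equiv 0$, which is exactly the exclusion of the horizontal umbrellas (for which $B\equiv 0$, $g$ is holomorphic, and the equivalence as you phrase it would fail). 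Tightening these two points --- carrying out the factorization of $\tau(g)$ and stating the holomorphicity dichotomy in terms of $B\equiv 0$ versus $B\not\equiv 0$ --- would turn the plan into a proof.
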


\section{A duality for minimal surfaces}\label{sc:duality}
 In this section, we first define a pair of dual generating spinors for a pair of 
 generating spinors of a nowhere-vertical minimal surface in $\Nil$. Next, we 
 show that the pair of dual generating spinors is solvable and define a dual minimal surface in 
 $\Nil$.
\subsection{Dual generating spinors and dual minimal surfaces}
 Let $\psi_1$ and $\psi_2$ be the pair of generating spinors of 
 a nowhere-vertical conformal minimal surface $f$ in $\Nil$. Then we define the 
 \textit{pair of dual generating spinors} $\psi_1^*$ and $\psi_2^*$ as
\begin{equation}\label{eq:dualspinors}
 \psi_1^* = \frac{4\sqrt{- B}}{h}  \psi_2 ,\quad \psi_2^* = \frac{4 \sqrt{- \overline B}}{h} \psi_1,
\end{equation}
 where $B$ is the Abresch-Rosenberg differential in \eqref{eq:potandB}
 and $h$ is the support function in \eqref{eq:uandh}. 
 If $B \equiv 0$,  the corresponding surface is the so-called horizontal umbrellas, 
 then $\psi_1^* \equiv  \psi_2^* \equiv 0$; thus, we exclude such minimal surfaces from our consideration.  Note that since $B$ is holomorphic, the zeros of $B$ are isolated 
 if $B \not\equiv 0$. The following theorem constitutes the main result of this paper.
 \begin{Theorem}\label{thm:dualmini}
 Let $\psi_1$ and $\psi_2$ be the generating spinors of a nowhere-vertical conformal  
 minimal surface  $f$ in $\Nil$, and let $\psi_1^*$ and $\psi_2^*$ be the dual generating spinors 
 defined in \eqref{eq:dualspinors}. Further,  assume that the Abresch-Rosenberg differential 
 satisfies  $B \not\equiv 0$. Then the vector of generating spinors 
 $\tilde {\psi}^* = (\psi_1^*, \psi_2^*)$ 
 satisfies the following linear spinor system$:$
\begin{equation}\label{eq:linspinor*}
 \tilde \psi^*_z = \tilde \psi^* \tilde U^* , \;\;
 \tilde \psi^*_{\bar z} =  \tilde \psi^* \tilde V^*, \;\;
\end{equation}
 where 
 \be 
 \tilde U^* =
 \begin{pmatrix}
 \frac{1}{2} w^*_z &
 - e^{w^*/2} \\ 
 B^* e^{-w^*/2} & 0
 \end{pmatrix}, \;\;
 \tilde V^* =
 \begin{pmatrix}
 0 & - \overline{B^*} e^{-w^*/2}\\
 e^{w^*/2} & \frac{1}{2}w^*_{\bar z}
 \end{pmatrix},
\ee
 with 
\begin{equation}\label{eq:Diracpotdual}
 e^{w^*/2} = \frac{4 i|B|}{h}\quad\mbox{and}\quad B^* = B.
\end{equation}
 Therefore, there exists a conformal immersion $f^*$ in $\Nil$ such that the generating spinors of $f^*$
 are given by $\psi_1^*$ and $\psi_2^*$ with the Dirac potential $\mathcal U^* = \mathcal V^* =  e^{w^*/2}$ and the left translated unit normal to $f^*$ is equal to $f^{-1} N$ up to sign.
  Moreover, the following properties hold$:$
 \begin{enumerate}
 \item By choosing the left-translated unit normal $N^*$ so that $(f^*)^{-1} N^* = f^{-1} N \subset \mathfrak{nil}_3$, the metric $e^{u^*} |dz|^2$, the support $h^* |dz|$, the Abresch-Rosenberg 
 differential $B^* dz^2$ and  the normal Gauss map $g^*$ are  respectively given by 
\begin{equation}\label{eq:*uBh}
 e^{u^*} = \frac{4^4|B|^2}{h^4} e^u , \quad  h^*=  \frac{4^2 |B|}{h},\quad B^*= B
 \quad\mbox{and}\quad  g^* 
 =g,
\end{equation}
 where $e^{u} |dz|^2$, $h|dz|, B^* dz^2$ and  $g$ 
 are the metric, the support, the Abresch-Rosenberg  differential and  the normal Gauss map 
 of $f$, respectively.
 \item The surface $f^*$ is nowhere-vertical and minimal.

 \item It satisfies the duality, that is, $f^{**} = f$ holds up to a rigid motion.
 \end{enumerate}
 The surface $f^*$ will be called the {\rm dual minimal surface} to $f$.
\end{Theorem}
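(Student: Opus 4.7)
The plan is to verify \eqref{eq:linspinor*} by direct differentiation of \eqref{eq:dualspinors} and then to read off the existence of $f^*$ and properties (1)--(3) from the spinor calculus of Section \ref{sc:Pr}. The key simplifications from minimality are: (a) $H\equiv 0$ in \eqref{eq:potandB} gives $\mathcal{U}=\mathcal{V}=e^{w/2}=ih/4$ and removes the $H_z$, $H_{\bar z}$ terms from $\tilde U$, $\tilde V$ in \eqref{eq:linspinor}; (b) by Theorem \ref{thm:mincharact} together with the compatibility of \eqref{eq:linspinor}, the Abresch--Rosenberg differential $B\,dz^2$ is holomorphic, so on any simply connected open set away from the isolated zeros of $B$ I may take $\sqrt{-B}$ holomorphic, $\sqrt{-\bar B}$ antiholomorphic, with branches chosen so that $\sqrt{-B}\sqrt{-\bar B}=-|B|$.

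I then verify \eqref{eq:linspinor*} one entry at a time. For instance, since $\sqrt{-\bar B}$ is antiholomorphic,
\[
\partial_z\psi_2^* \;=\; \frac{4\sqrt{-\bar B}}{h}\,\psi_{1,z}\;-\;\frac{4\sqrt{-\bar B}\,h_z}{h^2}\,\psi_1,
\]
and substituting $\psi_{1,z}=\tfrac12 w_z\psi_1+Be^{-w/2}\psi_2$ from \eqref{eq:linspinor}, together with $\partial_z\log h=\tfrac12 w_z$ (a direct consequence of $e^{w/2}=ih/4$) and $e^{-w/2}=-4i/h$, collapses the $\psi_1$-term and leaves $-\frac{16i\sqrt{-\bar B}\,B}{h^2}\psi_2=-\frac{16i|B|\sqrt{-B}}{h^2}\psi_2$, which is precisely $-e^{w^*/2}\psi_1^*$ with $e^{w^*/2}=4i|B|/h$. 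The same computation performed on $\partial_z\psi_1^*$, $\partial_{\bar z}\psi_1^*$, $\partial_{\bar z}\psi_2^*$ then confirms $B^*=B$ and produces consistent expressions for $w^*_z$, $w^*_{\bar z}$. Compatibility of \eqref{eq:linspinor*} is inherited from that of \eqref{eq:linspinor}, so Section \ref{sbsc:gspinors} yields a conformal immersion $f^*:\D\to\Nil$ with generating spinors $\psi_1^*,\psi_2^*$, unique up to left-translation.

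Properties (1)--(3) are then algebraic. Applying \eqref{eq:uandh} to $\psi_1^*,\psi_2^*$ yields $e^{u^*}=4^4|B|^2 e^u/h^4$ immediately and a raw support of $-16|B|/h$; the stipulation $(f^*)^{-1}N^*=f^{-1}N$ in the statement absorbs this by flipping the spinor-natural normal of $f^*$, and the same flip converts the natural Gauss map $-1/\bar g$ into $g^*=g$ via the antipodal relation $g\mapsto -1/\bar g$ for $N\mapsto -N$. Because $\mathcal{U}^*=\mathcal{V}^*=4i|B|/h$ is purely imaginary, the real part of \eqref{eq:potandB} forces $H^*=0$, so $f^*$ is minimal, and $|g^*|=|\psi_1|/|\psi_2|\ne 1$ shows it is nowhere-vertical. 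Iterating \eqref{eq:dualspinors} with $(B^*,h^*)=(B,16|B|/h)$ gives $\psi_j^{**}=\sqrt{-B}\sqrt{-\bar B}\,\psi_j/|B|=-\psi_j$, and since $\phi_1,\phi_2,\phi_3$ are invariant under $\psi_j\mapsto -\psi_j$, the Maurer--Cartan form of $f^{**}$ coincides with that of $f$, so $f^{**}=f$ up to left-translation. The main obstacle throughout is the coherent bookkeeping of square-root branches and normal orientation: an unexamined choice produces $g^*$ of the form $-1/\bar g$ or $h^*<0$, and one must recognize that the conventions $\sqrt{-B}\sqrt{-\bar B}=-|B|$ and $(f^*)^{-1}N^*=f^{-1}N$ are precisely what straighten these discrepancies out; once they are fixed, every identity in the theorem is routine.
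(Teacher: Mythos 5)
Your proposal is correct and follows essentially the same route as the paper: direct differentiation of the dual spinors \eqref{eq:dualspinors} against the linear system \eqref{eq:linspinor}, identification of $e^{w^*/2}$ and $B^*$ from the resulting entries, and then the same algebraic verifications of (1)--(3), including the normal flip that converts the raw support $-16|B|/h$ and the raw Gauss map $-1/\bar g$ into $h^*$ and $g^*=g$. Your more careful branch bookkeeping ($\sqrt{-B}\sqrt{-\overline{B}}=-|B|$, matching the paper's Remark) yields $\psi_j^{**}=-\psi_j$ rather than the paper's $\psi_j^{**}=\psi_j$, but since $\phi_1,\phi_2,\phi_3$ are quadratic in the spinors this is immaterial and, if anything, your version is the internally consistent one.
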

\begin{proof}
 By using the linear spinor system in \eqref{eq:linspinor} for $\tilde \psi = (\psi_1, \psi_2)$, we first 
 compute  the derivative of $\tilde \psi^*_1$ with respect to $z$:
 \begin{align*}
 \psi_{1, z}^* =  \left(\frac{4\sqrt{- B}}{h}\right)_z \psi_2+
 \frac{4\sqrt{- B}}{h}\psi_{2, z} = \left(\log \frac{4\sqrt{- B}}{h}\right)_z \psi_1^*  
 - \frac{i h B }{4 |B|} \psi_2^*.
 \end{align*}
  On the one hand, the definition of $e^{w^*/2}$ and $B^*$ in \eqref{eq:Diracpotdual} and the logarithmic derivative  of $e^{w^*/2}$ give
  \[
  \frac12 w^*_z = \left(\log \frac{4\sqrt{- B}}{h}\right)_z \quad\mbox{and} \quad 
  - \frac{i h B }{4 |B|}  = B^* e^{-w^*/2}.
  \]
  Thus the first column of $\tilde \psi^*_z$ in \eqref{eq:linspinor*} follows. Next, 
  we compute 
 \begin{align*}
 \psi_{2, z}^* &=  \left(\frac{4\sqrt{- \overline B}}{h}\right)_z \psi_1+ 
 \frac{4\sqrt{- \overline B}}{h}\psi_{1, z} \\ &=-(\log h)_z \frac{4 \sqrt{- \overline B}}{h}\psi_1  
 + \frac{4\sqrt{- \overline B}}{h}\left( \frac12 w_z \psi_1 + B e^{-w/2} \psi_2 \right). 
 \end{align*}
  Since $e^{w/2} = i h /4$, it can be simplified as
  \[
 \psi_{2, z}^* = \frac{4\sqrt{- \overline B}}{h} B e^{-w/2} \psi_2 = 
 e^{w^*/2} \psi_1^*. 
  \]  
 Thus  $\tilde \psi^*_z$ in 
 \eqref{eq:linspinor*} is obtained. A similar computation shows that 
 $\tilde \psi^*_{\bar z}$ of \eqref{eq:linspinor*} can be obtained. Therefore 
 $\tilde \psi^* = (\tilde \psi_1^*, \tilde \psi_2^*)$ defines a surface $f^*$ in $\Nil$.
  Note that the surface $f^*$ has the same left-translated tangent plane 
  to the original surface $f$, and thus the left translated unit normal to $f^*$ 
 is equal to $f^{-1} N$ up to sign.
 We now show the properties of $f^*$:
 
{\rm (1):} 
  The metric of $f^*$ can be computed by the definition of $\psi_1^*$ $\psi_2^*$, which is 
  given in \eqref{eq:*uBh} the first formula. 
 Since we choose $(f^*)^{-1} N^* = f^{-1} N$,
 the normal Gauss map $g^*$ is $g$.

  Recall that the support function $h^*$ is defined by  $h^* = \langle (f^*)^{-1} L^*, e_3\rangle$,
  it follows that 
  \[
h^* =e^{u^*/2} \langle f^{-1} N, e_3\rangle   = \frac{4^2 |B|}{h}.
  \]
 Moreover, the Abresch-Rosenberg differential can be computed easily as in \eqref{eq:*uBh}.
 
{\rm (2):} The minimality $H^*=0$ follows from the form of the Dirac potential 
\[
\mathcal U^* = \mathcal V^*  = e^{w^*/2 } = -\frac{H^*}2 e^{u^*/2} + \frac{i}4 h^* = 
\frac{4 i |B|}{h} \in i \mathbb R,
\]
 and it is clear that $f^*$ is nowhere-vertical by the construction.
 
{\rm (3):} The surface $f^{**} = (f^*)^*$ is given by the pair of double dual generating spinors 
\begin{align}\label{eq:ddualspinors}
 \psi_1^{**} = \frac{4 \sqrt{- B^*}}{h^*} \psi_2^*, \quad \psi_2^{**}  = \frac{4 \sqrt{- \overline {B^*}}}{h^*} \psi_1^*.
\end{align}
 Then the relations in \eqref{eq:dualspinors} and \eqref{eq:*uBh} show 
 $\psi_1^{**} = \psi_1$ and $\psi_2^{**} = \psi_2$ hold, and therefore, 
 the pair of generating spinors $(\psi_1^{**},  \psi_2^{**})$ defines 
 the original minimal surface $f$ 
 up to the rigid motion. This completes the proof.
\end{proof}

\begin{Remark}
 From \eqref{eq:dualspinors}, it is evident that zeros of $B$ are the singularities of the dual minimal surface $f^*$. By using the dual generating spinors $\psi_1^*$ and $\psi_2^*$, the metric $e^{u^*/2}$,
 the support $h^*$ and the normal Gauss map $g^*$ can be represented by 
  \begin{equation}\label{eq:locaexpression}
  e^{u^*} = 4 (|\psi_1^*|^2 + |\psi_2^*|^2)^2, \quad 
  h^* = 2 (|\psi_2^*|^2 - |\psi_1^*|^2), \quad 
  g^* = - \frac{\psi_1^* }{\overline{\psi_2^*}}.
 \end{equation}
 Here we choose the branch of $\sqrt{-B}$ so that $\overline{\sqrt{-\overline{B}}}=-\sqrt{-B}$.
\end{Remark}
 An original minimal surface $f$ and the dual minimal  surface $f^{*}$ in 
 Theorem \ref{thm:dualmini} can be summarized in Table \ref{tb:geo}.
\begin{table}[tp]
\extrarowheight=1mm
\begin{tabular}{c|c|c|c|c}
 {\small surface} &{\small mean curvature} & 
 {\small  metric} & {\small holo. AR-differential} 
 &{\small support} \\[1mm]\hline
 $f$ & $H =0$ & $e^u|dz|^2$ & $B dz^2$ & 
 $ h |dz|$ \\[1mm]\hline
  $f^*$ &$H =0$ &$\frac{4^4 |B|^2 }{h^4}  e^u|dz|^2$ & $B dz^2$ &$\frac{4^2 |B|}{h} |dz|$\\[1mm]
\end{tabular}
\vspace{0.3cm}
\caption{
 The geometric data of a minimal surface $f$ and its dual $f^*$. 
 }\label{tb:geo}
\end{table}
\section{The dual Sym formula}
 In this section, we show that the pair of minimal surfaces in 
 Theorem \ref{thm:dualmini} will be obtained by the so-called \textit{Sym-formula}. 
 On the level of the Sym-formula, the concept of duality for minimal surfaces admits 
 a simple algebraic interpretation.
\subsection{Flat connections and extended frames}
 We recall the family of Maurer-Cartan forms $\alpha^{\lambda}$ is 
 defined by 
 \begin{equation}\label{eq:alpha}
 \alpha^{\l} = \tilde U^{\l} dz + \tilde V^{\l} d\bar z, \quad \lambda \in \mathbb S^1,
 \end{equation}
 with 
 \begin{equation}\label{eq:U-V1lambda}
 \tilde U^{\lambda} =
 \begin{pmatrix}
 \frac{1}{4} w_z  + \frac{1}{2} H_{z} e^{-w/2+u/2}&
 - \l^{-1}e^{w/2} \\ 
 \l^{-1}B e^{-w/2} &  -\frac{1}{4} w_z
 \end{pmatrix}, \;\;
 \tilde V^{\lambda} =
 \begin{pmatrix}
  -\frac{1}{4} w_{\bar z} & - \l \bar B e^{-w/2}\\
 \l e^{w/2} & \frac{1}{4}w_{\bar z}+\frac{1}{2} H_{\bar z}e^{- w/2 + u/2}
 \end{pmatrix}.
 \end{equation}
 Note that $\tilde U^{\lambda}|_{\lambda =1} = \tilde U$ and 
 $\tilde V^{\lambda}|_{\lambda =1} = \tilde V$ in \eqref{eq:linspinor}.
 In addition to a characterization of minimal surfaces in terms of the harmonic
 normal Gauss map as in Theorem \ref{thm:mincharact}, the following theorem 
 has been known.
 \begin{Theorem}[Theorem 5.3 in \cite{DIK:mini}]\label{thm:mincharact2}
 Let $f : \D \to \Nil$ be a nowhere-vertical conformal immersion and $\alpha^{\l}$
 the $1$-form defined in \eqref{eq:U-V1lambda}.
 Moreover, assume that the unit normal $f^{-1} N$ is upward.
 Then the following statements are equivalent{\rm:}
 \begin{enumerate}
 \item $f$ is a minimal surface.
 \item $d + \alpha^{\l}$ is a family of flat connections on $\D \times  \ISU$.
 \end{enumerate}
 \end{Theorem}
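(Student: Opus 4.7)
The plan is to prove the equivalence by identifying the $\mathbb S^1$-loop $d+\alpha^\lambda$ with the $\lambda$-twisted Maurer--Cartan form of a moving frame for the normal Gauss map $g:\D\to\mathbb H^2 = \ISU/\Uone$, and then invoking Theorem~\ref{thm:mincharact}. The classical loop-group characterization of harmonic maps into a Riemannian symmetric space (Pohlmeyer--Uhlenbeck) states that $g$ is harmonic if and only if the $\lambda$-twisted Maurer--Cartan form of any $\ISU$-valued lift $F$, built from the symmetric decomposition $\isu = \mathfrak k \oplus \mathfrak m$ (with $\mathfrak k = \Lie(\Uone)$) by inserting $\lambda^{\pm 1}$ on the $(1,0)$- and $(0,1)$-components of the $\mathfrak m$-part, is flat for every $\lambda\in\mathbb S^1$.

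Concretely, I would proceed in three steps. First, construct the $\ISU$-valued frame $F$ adapted to the generating spinors $(\psi_1,\psi_2)$, e.g.\ a suitable normalization of $\frac{1}{\sqrt{|\psi_1|^2-|\psi_2|^2}}\begin{pmatrix}\bar\psi_1 & \psi_2 \\ \bar\psi_2 & \psi_1\end{pmatrix}$, and compute $F^{-1}dF$ using the linear spinor system \eqref{eq:linspinor} and the Dirac-potential formulas \eqref{eq:potandB}. Second, verify that its $\lambda$-twisted Maurer--Cartan form matches the explicit expression \eqref{eq:U-V1lambda}, and in particular that the $\tfrac12 H_z e^{-w/2+u/2}$ and $\tfrac12 H_{\bar z}e^{-w/2+u/2}$ diagonal contributions vanish exactly when $H\equiv 0$, ensuring that the loop takes values in $\isu$. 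Third, apply the Pohlmeyer--Uhlenbeck characterization together with Theorem~\ref{thm:mincharact} to conclude $(1)\Leftrightarrow(2)$.

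The main obstacle is the second step: pinning down the frame construction and verifying the matrix identification in full, in particular keeping track of the signs and the $H_z,H_{\bar z}$ factors on the diagonal. A more pedestrian alternative is to expand $d\alpha^\lambda+\alpha^\lambda\wedge\alpha^\lambda=0$ directly as a Laurent polynomial in $\lambda$: writing $\tilde U^\lambda = D_U + \lambda^{-1}A_U$ and $\tilde V^\lambda = D_V + \lambda A_V$ with $D_U,D_V$ diagonal and $A_U,A_V$ off-diagonal, the flatness splits into three equations at $\lambda^{-1},\lambda^0,\lambda^{+1}$; the $\lambda^{\pm 1}$ equations force $B_{\bar z}=0$ and $H_z = H_{\bar z}=0$ and yield the harmonic-map equation for $g$ in the stereographic coordinate \eqref{eq:Normal}, while the $\lambda^0$ equation reduces to the Gauss equation of the immersion and is automatic. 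The $\isu$-reality constraint then forces $e^{w/2}\in i\R$, hence $H\equiv 0$, and Theorem~\ref{thm:mincharact} yields the equivalence.
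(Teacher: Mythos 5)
The paper does not actually prove this statement: it is imported verbatim as Theorem~5.3 of \cite{DIK:mini}, so there is no in-text proof to compare against. Your proposal is nevertheless a faithful reconstruction of how that cited result is established, and your first route (frame adapted to the generating spinors, identification of $\alpha^{\lambda}$ with the $\lambda$-twisted Maurer--Cartan form of a lift of the normal Gauss map $g:\D\to\mathbb H^2=\ISU/\Uone$, then Pohlmeyer--Uhlenbeck plus Theorem~\ref{thm:mincharact}) is exactly the mechanism the paper alludes to in its Remark that $F$ is the extended frame of a non-conformal harmonic map into $\mathbb H^2$. Your ``pedestrian'' Laurent-expansion route also works, but one bookkeeping point should be reordered: the condition $H\equiv 0$ does not come out of the $\lambda^{\pm 1}$ coefficients of the flatness equation; it is forced already by the requirement that the connection live on $\D\times\ISU$, since the $\isu$-reality condition on $\tilde U^{\lambda}+\tilde V^{\lambda}$ for $\lambda\in\mathbb S^1$ demands $\overline{e^{w/2}}=-e^{w/2}$, and $e^{w/2}=-\tfrac{H}{2}e^{u/2}+\tfrac{i}{4}h$ then gives $H=0$ pointwise. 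With that in hand the $\lambda^{-1}$ (resp.\ $\lambda^{+1}$) coefficient reduces precisely to $B_{\bar z}=0$ (resp.\ its conjugate), and the $\lambda^{0}$ coefficient to the Gauss equation. For the converse direction $(1)\Rightarrow(2)$ you should make explicit that the holomorphicity of $B$ for minimal surfaces is a genuine input (the Abresch--Rosenberg/Berdinski\u{\i}--Ta\u{\i}manov theorem, or equivalently the Codazzi part of the structure equations), not something that follows formally from minimality alone. With those two clarifications your outline closes into a complete proof.
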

 From Theorem \ref{thm:mincharact2}, the family of Maurer-Cartan form 
 $\alpha^{\lambda}$ in \eqref{eq:alpha}
 can be simplified as follows 
 \begin{equation}\label{eq:alpha2}
 \alpha^{\l} = U^{\l} dz + V^{\l}d \bar z, 
 \end{equation}
 with
 \begin{equation*}
U^{\l}=
 \begin{pmatrix}
 \frac{1}{4} w_z &
 - \l^{-1}e^{w/2} \\ 
 \l^{-1}B e^{-w/2} &  -\frac{1}{4} w_z
 \end{pmatrix}, \quad 
V^{\l}=
 \begin{pmatrix}
  -\frac{1}{4} w_{\bar z} & - \l \bar B e^{-w/2}\\
 \l e^{w/2} & \frac{1}{4}w_{\bar z}
 \end{pmatrix},
 \end{equation*}
 and we give the following definition.
\begin{Definition}[Definition 1 in \cite{DIK:mini}]
 Let $f$ be a nowhere-vertical minimal surface in $\Nil$ and 
 $F$ as above 
 the corresponding $\ISU$-valued solution to the equation
 $F^{-1} d F = \alpha^{\lambda}, (\lambda \in \mathbb S^1)$, 
 where $\alpha^{\lambda}$ is defined by \eqref{eq:alpha2}.
  Then $F$ is called 
 \textit{extended frame} of the minimal surface $f$.
\end{Definition}
 In particular, we can express the extended frame 
 associated with respect to the generating spinors 
 $\psi_1$ and $\psi_2$ for a minimal surface;
 \begin{equation}\label{eq:extframin}
 F(\lambda) =\frac{1}{\sqrt{|\psi_1(\lambda)|^2-|\psi_2(\lambda)|^2}} 
 \begin{pmatrix}
 \sqrt{i}^{-1} \psi_1(\lambda) & 
 \sqrt{i}^{-1} \psi_2(\lambda) \\ 
 \sqrt{i} \;\overline{\psi_2(\lambda)} & 
  \sqrt{i}\; \overline{ \psi_1(\lambda)}
 \end{pmatrix}. 
 \end{equation}

 Here $\psi_1(\lambda)$ and $\psi_2(\lambda)$ denote families of functions such that 
 $\psi_1(\lambda=1) = \psi_1$ and $\psi_2(\lambda=1 )= \psi_2$, which 
 are the generating spinors of the minimal surface $f$, respectively.
\begin{Remark}
 In fact the extended frame $F$ of a nowhere-vertical minimal surface is the 
 extended frame of a non-conformal harmonic map into the hyperbolic two-space
 $\mathbb H^2$, and the loop group method for such harmonic maps has been 
 studied in \cite{BRS:Min}.
\end{Remark}

\subsection{The dual Sym formula}
 We first identify the Lie algebra 
 $\mathfrak{nil}_3$ with the 
 Lie algebra $\isu$ as a \textit{real vector space}. 
 In $\isu$, we choose the following basis:
\begin{equation}\label{eq:basis}
 \mathcal{E}_1 = \frac{1}{2} \begin{pmatrix} 0 & i \\ -i &0 \end{pmatrix}, \;\;
 \mathcal{E}_2 = \frac{1}{2} \begin{pmatrix} 0 & -1 \\ -1 & 0 \end{pmatrix}\;\;
 \mbox{and}\;\;\;
 \mathcal{E}_3 = \frac{1}{2} \begin{pmatrix} -i & 0\\ 0 &i \end{pmatrix}.
\end{equation}
 One can see that $\{\mathcal{E}_1,\mathcal{E}_2,\mathcal{E}_3\}$ is an 
 orthogonal basis 
 of $\isu$ with timelike vector 
 $\mathcal{E}_3$. A linear isomorphism $\Xi:\mathfrak{su}_{1,1}\to 
 \mathfrak{nil}_3$ is then given by
 \begin{equation}\label{eq:Nilidenti}
\mathfrak{su}_{1,1} \ni 
x_1 \mathcal{E}_1 + x_2 \mathcal{E}_2 + x_3 \mathcal{E}_3
\longmapsto
x_1 e_1 +  x_2 e_2 +  x_3 e_3 \in \mathfrak{nil}_{3}.
\end{equation}
 Note that the linear isomorphism $\Xi$ is not a Lie algebra 
 isomorphism. 
 Next, we realize $\Nil$ in $\mathrm{GL}_4 \R$ by 
 $\iota (x_1, x_2, x_3)= 
 e^{x_1} E_{11} + \sum_{i=2}^4 E_{ii} + x_1 E_{23} + (x_3 + \frac12 x_1 x_2) E_{24} + x_2 E_{34}
 $, where $E_{ij} (i, j = 1, 2, 3, 4)$ denote $4$ by $4$ matrices with the $ij$-entry $1$ and  
 all others $0$. Consider the exponential map 
 $\exp:\mathfrak{nil}_3\to \mathrm{Nil}_3$ is given by 
\[
\exp (x_1 e_1 + x_2 e_2 + x_3 e_3) = e^{x_1} E_{11} + \sum_{i=2}^4 E_{ii} + x_1 E_{23} + (x_3 + \frac12 x_1 x_2) E_{24} + x_2 E_{34}.
\]
 We define a smooth bijection  
 $\Xi_{\rm nil}:\isu \to \Nil$ by $\Xi_{\rm nil}:=\exp \circ \Xi$.
 By using the logarithmic derivatives
 for the extended frame with respect to $\lambda$, denoted by $\partial_{\lambda}$,
 we now prove the Sym-formula.

\begin{Theorem}\label{thm:Symdual}
 Let $F$ be the extended frame
 for some nowhere-vertical minimal surface $f$ with $B\not \equiv 0$ in $\Nil$,  and define
 $m_{\pm}$ and $N_m$ respectively the maps 
 \begin{equation}\label{eq:SymMin}
 m_{\pm}=-i \lambda (\partial_{\lambda} F) F^{-1} 
 \pm N_m\;\;
 \mbox{and} \;\;
 N_{m}= \frac{i}{2} \ad (F) \sigma_3.
 \end{equation}
 Moreover, define a map  $f_{\pm}^{\lambda}:\mathbb{D}\to \mathrm{Nil}_3$ by
 $f_{\pm}^{\lambda}:=\Xi_{\mathrm{nil}}\circ \hat{f_{\pm}^{\lambda}}$
 with
\begin{equation}\label{eq:symNil}
 \hat f_{\pm}^{\lambda} = 
    \left.
    \left(m_{\pm}^o -\frac{i}{2} \lambda (\partial_{\lambda} m_{\pm})^d\right)
    \;\right|_{\lambda \in \mathbb{S}^1}, 
\end{equation}
 where the superscripts ``$o$'' and ``$d$'' denote the off-diagonal and 
 diagonal part, 
 respectively. Then, for each $\lambda \in \mathbb{S}^1$, 
 the maps $f_{\pm}^{\lambda}$ are minimal surfaces in $\Nil$ and 
 $N_m$ is the normal Gauss map of $f_{\pm}^{\lambda}$. 
 In particular, $f_{-}^{\lambda}|_{\lambda =1}$ is the original minimal surface $f$ 
 up to a rigid motion and $f_{+}^{\lambda}|_{\lambda =1}$ is the 
 dual minimal surface $f^*$. Moreover, for each $\lambda \in \mathbb S^1$, $f_{-}^{\lambda}$ and 
 $f_{+}^{\lambda}$ are dual to each other.
\end{Theorem}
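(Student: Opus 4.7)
The plan is to treat the two sign choices in parallel: the ``$-$'' case is the Sym formula established in \cite{DIK:mini} for nowhere-vertical minimal surfaces in $\Nil$, while the ``$+$'' case is new and should correspond, at the level of generating spinors, to the transformation \eqref{eq:dualspinors} of Theorem \ref{thm:dualmini}. All universal claims (minimality, normal Gauss map, membership in $\Nil$, and behavior for all $\lambda\in\mathbb S^1$) will follow from the flatness of $d+\alpha^{\lambda}$ in Theorem \ref{thm:mincharact2}; the substantive new content is the identification $f_+^\lambda|_{\lambda=1}=f^*$.

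For $f_-^\lambda$ I would invoke directly the Sym formula of \cite{DIK:mini}: the decomposition of $-i\lambda(\partial_{\lambda}F)F^{-1}$ into off-diagonal and diagonal parts, combined with the flatness of the family of connections, produces an $\isu$-valued immersion whose image under $\Xi_{\mathrm{nil}}$ is a minimal surface in $\Nil$, with left-translated unit normal $N_m=\tfrac{i}{2}\ad(F)\sigma_3$, and at $\lambda=1$ one recovers $f$ up to rigid motion. All claims concerning $f_-^\lambda$ are then obtained.

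For $f_+^\lambda$, the starting observation is
\begin{equation*}
\hat f_+^\lambda-\hat f_-^\lambda \;=\; 2\left(N_m^o-\tfrac{i}{2}\lambda(\partial_{\lambda}N_m)^d\right),
\end{equation*}
from which the left-translated tangent planes of $f_+^\lambda$ and $f_-^\lambda$ coincide and their left-translated unit normals agree up to sign; in particular both have normal Gauss map determined by $N_m$, which at $\lambda=1$ equals $g$, matching $g^*$ in \eqref{eq:*uBh}. The central step is to identify $f_+^\lambda|_{\lambda=1}$ with the dual surface $f^*$. My approach is to read off the generating spinors $\psi_1^+,\psi_2^+$ of $f_+^\lambda|_{\lambda=1}$ from the extended frame by a modified version of \eqref{eq:extframin} that incorporates the sign flip, and verify directly that these coincide with $\psi_1^*,\psi_2^*$ in \eqref{eq:dualspinors}; equivalently, that the metric and support extracted from $\hat f_+^\lambda|_{\lambda=1}$ reproduce $e^{u^*}$ and $h^*$ in \eqref{eq:*uBh}. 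Once the spinor data match, Theorem \ref{thm:dualmini} forces $f_+^\lambda|_{\lambda=1}=f^*$ up to rigid motion, and in particular $f_+^\lambda$ is minimal at $\lambda=1$; minimality for all $\lambda\in\mathbb S^1$ then follows by the associated-family argument applied to $f^*$.

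For the in-family duality, I would observe that the dual spinor construction \eqref{eq:dualspinors} depends only on $B$, $h$, and the spinors $\psi_i$, all of which extend to the loop parameter through $F(\lambda)$. Thus the formula defines a pair of dual spinors $(\psi_1^*(\lambda),\psi_2^*(\lambda))$ at each $\lambda$, and the $\lambda=1$ identification, applied loop-parameter-wise, shows that these are generating spinors of $f_+^\lambda$. The involutivity $f^{**}=f$ from Theorem \ref{thm:dualmini}(3) then yields $f_-^\lambda=(f_+^\lambda)^*$ up to rigid motion for every $\lambda\in\mathbb S^1$. The hard part will be the explicit identification underlying the second step: computing $(\partial_{\lambda}N_m)^d$ explicitly, matching the resulting shift $2(N_m^o-\tfrac{i}{2}\lambda(\partial_{\lambda}N_m)^d)$ to the spinor transformation $(\psi_1,\psi_2)\mapsto(\psi_1^*,\psi_2^*)$, and reconciling this additive shift in $\isu$ with the Heisenberg multiplication through the bijection $\Xi_{\mathrm{nil}}=\exp\circ\Xi$, which is only smooth, not a Lie algebra homomorphism.
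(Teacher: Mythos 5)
Your outline agrees with the paper on the easy parts (quoting \cite{DIK:mini} for $f_-^{\lambda}$, and aiming to show that the generating spinors of $f_+^{\lambda}|_{\lambda=1}$ are exactly $\psi_1^*,\psi_2^*$ so that Theorem \ref{thm:dualmini} takes over), but the step you yourself flag as ``the hard part'' is the entire content of the theorem, and the route you propose for it is not the one that works cleanly. The paper does not study the difference $\hat f_+^{\lambda}-\hat f_-^{\lambda}$ at all; it differentiates $m_+$ in $z$ and uses $\partial_z\bigl((\partial_\lambda F)F^{-1}\bigr)=\ad(F)\partial_\lambda U^{\lambda}$ together with $\partial_z N_m=\tfrac{i}{2}\ad(F)[U^{\lambda},\sigma_3]$ to get
\begin{equation*}
\partial_z m_{\pm}=\ad(F)\left(-i\lambda\,\partial_\lambda U^{\lambda}\pm\tfrac{i}{2}[U^{\lambda},\sigma_3]\right).
\end{equation*}
The whole mechanism of the duality is the sign-dependent cancellation inside the parentheses: since $[E_{12},\sigma_3]=-2E_{12}$ and $[E_{21},\sigma_3]=2E_{21}$, the choice ``$-$'' kills the $\lambda^{-1}Be^{-w/2}E_{21}$ entry and keeps $-2i\lambda^{-1}e^{w/2}E_{12}$ (recovering $f$), while the choice ``$+$'' kills the $e^{w/2}E_{12}$ entry and leaves $2i\lambda^{-1}Be^{-w/2}E_{21}$; conjugating the latter by the explicit frame \eqref{eq:extframin} is precisely what produces the factor $4\sqrt{-B}/h$ and hence the dual spinors \eqref{eq:dualspinors}. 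Your proposal never identifies this cancellation, and without it there is no reason the quantity $2\bigl(N_m^o-\tfrac{i}{2}\lambda(\partial_\lambda N_m)^d\bigr)$ should match the spinor transformation.

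Two further concrete problems. First, from the additive identity $\hat f_+^{\lambda}-\hat f_-^{\lambda}=2\bigl(N_m^o-\tfrac{i}{2}\lambda(\partial_\lambda N_m)^d\bigr)$ you infer that the left-translated tangent planes of $f_{\pm}^{\lambda}$ coincide; this does not follow, because the surfaces are $\Xi_{\rm nil}\circ\hat f_{\pm}^{\lambda}$ and $\Xi_{\rm nil}=\exp\circ\,\Xi$ is neither linear nor a homomorphism, so an additive relation between the $\isu$-valued maps says nothing direct about the Maurer--Cartan forms $(f_{\pm}^{\lambda})^{-1}df_{\pm}^{\lambda}$ in $\Nil$. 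The paper instead computes $(f_+^{\lambda})^{-1}\partial_z f_+^{\lambda}$ from scratch, including the diagonal correction $\partial_z\bigl(-\tfrac{i}{2}\lambda(\partial_\lambda m_+)^d\bigr)$, and reads off the $\phi_k^+$ in spinor form. Second, for minimality at all $\lambda$ you appeal to ``the associated-family argument applied to $f^*$,'' which presupposes that $f_+^{\lambda}$ is the associated family generated by an extended frame of $f^*$ --- essentially the statement being proved; the paper avoids this circularity by observing directly that the Dirac potential $\tfrac{i}{2}(|\psi_1^*(\lambda)|^2-|\psi_2^*(\lambda)|^2)=4i|B|/h$ is $\lambda$-independent and purely imaginary, which forces $H^*=0$ for every $\lambda$. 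As written, your argument is a correct statement of what must be checked rather than a proof of it.
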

\begin{proof}
 The statement for $f_-^{\lambda}|_{\lambda=1}$ has been proved in Theorem 6.1 in \cite{DIK:mini}, that is
 $f_-^{\lambda}|_{\lambda=1}$ is the original minimal surface $f$ up to rigid motion.
 We now consider $f_+^{\lambda}|_{\lambda =1}$. A straightforward computation shows that 
\begin{eqnarray}\label{eq:derivative}
 \partial_z m_+ &=& \ad (F)
 \left(-i \lambda \partial_{\lambda} U^{\lambda}+\frac{i}{2}[U^{\lambda}, \sigma_3] \right) \\ \nonumber &=&
 2 i\lambda^{-1} B e^{-w/2} \ad (F)
 \begin{pmatrix} 
 0 & 0 \\ 1 & 0
 \end{pmatrix}  \\ 
\nonumber
&=& 
\nonumber
- \frac{16 \lambda^{-1} B}{h^2} 
\begin{pmatrix}
 - \psi_2 (\lambda) \overline{\psi_1 (\lambda)} & - i \psi_2(\lambda)^2 \\
 -i \overline{\psi_1 (\lambda)}^2 & \psi_2 (\lambda) \overline{\psi_1 (\lambda)}
\end{pmatrix}.
\end{eqnarray}
 Note that $e^{\omega/2} = \frac{i}4 h \in i \R$
 is the Dirac potential for $f$.
 Since the dual generating spinors $\psi_1^*$ and $\psi_2^*$ are defined by 
\begin{equation}\label{eq:psi+}
 \psi_1^* = \frac{4 \sqrt{- B}}{h}  \psi_2 ,\quad \psi_2^* = \frac{4 \sqrt{- \overline B}}{h} \psi_1,
\end{equation}
 thus 
\begin{equation}\label{eq:phiequation}
\partial_z m_+= 
\phi_{1}^+(\lambda) \mathcal{E}_1 + \phi_{2}^+(\lambda) \mathcal{E}_2
-i \phi_3^+(\lambda) \mathcal E_3
\end{equation}
 with
\begin{equation*}
 \phi_{1}^+(\lambda) = \lambda^{-1} (\overline{\psi_2^*(\lambda)}^2 - \psi_1^*(\lambda)^2),\;\;
 \phi_{2}^+(\lambda) = i\lambda^{-1} \left(\overline{\psi_2^*(\lambda)}^2 + \psi_1^*(\lambda)^2\right)
\end{equation*}
and
\begin{equation*}
\phi_3^+(\lambda) = 2 \lambda^{-1} \psi_1^*(\lambda) \overline{\psi_2^*(\lambda)}.
 \end{equation*}
 Thus using \eqref{eq:derivative}, 
 the derivative of $m_+$ with respect to $z$ and $\lambda$ can be 
 computed as
\begin{eqnarray}\label{eq:derivative2}
 \partial_z ( i \lambda (\partial_{\lambda} m_+)) = 
 i \lambda \partial_{\lambda} (\partial_z m_+) & = &
 i \lambda \partial_{\lambda} \left(2 i \lambda^{-1} B e^{-w/2} \ad (F) 
 \begin{pmatrix} 
 0 & 0 \\ 1 & 0
 \end{pmatrix}
 \right), \\
 &=& -i(\partial_z m_+)
   -\left[m_+ - N_m, \partial_z m_+\right].
\nonumber
\end{eqnarray}
 Here $[a, b]$ denotes the 
 usual bracket of matrices, that is, $[a, b] = ab -b a$.
 Using \eqref{eq:derivative}, we have 
\begin{equation*}
 \left[N_m, \partial_z m_+ \right]^d 
 = - i (\partial_z m_+)^d 
\end{equation*}
 and
\begin{equation*}
 - [m_+, \partial_z m_+]^d = \left(\phi_1^+(\lambda) 
 \int \phi_2^+(\lambda) \, dz - \phi_2^+(\lambda) \int \phi_1^+(\lambda) \, dz \right) 
 \mathcal{E}_3.
\end{equation*}
 Thus we have
\begin{equation}\label{eq:partialfm}
\partial_z \left( -\frac{i \lambda \partial_{\lambda} m_+}{2}^d\right) 
 = \left(
  \phi_3^+(\lambda) -\frac{1}{2} \phi_1^+(\lambda) \int \phi_2^+(\lambda)\, dz  
  + \frac{1}{2}\phi_2^+(\lambda) \int \phi_1^+(\lambda)\, dz 
  \right) 
 \mathcal{E}_3.
\end{equation}
 Therefore, combining \eqref{eq:phiequation} and \eqref{eq:partialfm}, we obtain
$$
  \partial_z \hat f_+^{\lambda} = \phi_1^+(\lambda) \mathcal{E}_1 + \phi_2^+(\lambda) \mathcal{E}_2 
  +\left(\phi_3^+(\lambda)  - \frac{1}{2}\phi_1^+(\lambda) \int \phi_2^+(\lambda)\, dz
  + \frac{1}{2}\phi_2^+(\lambda) \int \phi_1^+(\lambda)\, dz
 \right) \mathcal{E}_3.
$$
 We now use the identification \eqref{eq:Nilidenti} with the left translation 
 $(f_+^{\lambda})^{-1}$, that is, 
 \begin{equation}\label{eq:immersion}
 (f_+^{\lambda})^{-1} \partial_z f_+^{\lambda} =  \phi_1^+(\lambda) e_1 + 
 \phi_2^+(\lambda) e_2 + \phi_3^+(\lambda) e_3.
\end{equation}
 Thus $\psi_1^*(\lambda)$ and $\psi_2^*(\lambda)$ 
 are spinors for $f_+^{\lambda}$ for 
 each $\lambda \in \mathbb{S}^1$. 
 In particular, the function
\[
 \frac{i}{2}(|\psi_1^*(\lambda)|^2 
 - |\psi_2^*(\lambda)|^2)
 = e^{w^*/2} = \frac{4 i|B|}{h}
 \]
 does not 
 depend on $\lambda$ and implies that 
 the mean curvature $H$ is equal to zero. 
 Moreover, the conformal factor of the induced 
 metric of $f_+^{\lambda}$ is given by  
\[
 e^{u^*} = 4(|\psi_1^*(\lambda)|^2 + |\psi_2^*(\lambda)|^2)^2 = \frac{4^4 |B|^2}{h^4} e^{u}.
\]
 This metric is degenerate at points where $B$ vanishes.
 Note that because of the holomorphicity of $B$, zeros of $B$ are isolated.
 Thus the map $f_+^{\lambda}$ actually defines a minimal surface 
 in $\Nil$ for each $\lambda \in \mathbb{S}^1$ where the points $B \neq 0$. 
 This completes the proof.
\end{proof}
\subsection{Examples}\label{subsc:examples}
 The general construction of minimal surfaces in $\Nil$ using holomorphic potentials and the loop group decomposition has been described in \cite[Sections 7 and 8]{DIK:mini}. We now present several explicit examples of minimal surfaces in 
 $\Nil$ and their duals, constructed via holomorphic potentials as introduced in \cite[Section 9]{DIK:mini}. 
 
\subsection*{Hyperbolic paraboloid and its dual} 
 The following holomorphic potential generates the hyperbolic paraboloid $x_3 = \frac{1}{2}x_1x_2$, as described in
 \cite[Section 9.2]{DIK:mini}:
\[
\xi_- = -\frac{i}{4} \lambda^{-1} \begin{pmatrix} 0 & 1 \\ 1 & 0 \end{pmatrix} dz.
\]

The associated family of minimal surfaces is given by
\[
f^\lambda = \left( -2i(p - p^*),\ -\sinh(2(p + p^*)),\ i(p - p^*)\sinh(2(p + p^*)) \right),
\]
where $p = -\frac{i}{4} \lambda^{-1} z$ and $p^* = \frac{i}{4} \lambda \bar{z}$.

The extended frame of this surface is
\[
F = \begin{pmatrix}
\sqrt{i}^{-1} \cosh(p + p^*) & \sqrt{i}^{-1} \sinh(p + p^*) \\
\sqrt{i} \sinh(p + p^*) & \sqrt{i} \cosh(p + p^*)
\end{pmatrix}.
\]

Using the Sym formula \ref{thm:Symdual}, the dual minimal surface is
\[
f^\lambda_+ = \left( -2i(p - p^*),\ \sinh(2(p + p^*)),\ -i(p - p^*)\sinh(2(p + p^*)) \right).
\]
 It is straightforward to verify that $f^\lambda_+$ again represents the same hyperbolic paraboloid, 
 showing that the surface is \emph{self-dual}.

\subsection*{Helicoid and its dual}
The holomorphic potential
\[
\eta = D\,dz,\quad D = \begin{pmatrix} c & a\lambda^{-1} + b\lambda \\ -a\lambda - b\lambda^{-1} & -c \end{pmatrix},\quad a = -b,\quad c = \frac{1}{2},
\]
generates a helicoid when the initial condition is the identity matrix \cite[Section 9.3]{DIK:mini}. Applying the Sym formula to construct the dual surface 
again yields the same helicoid, confirming that it is also \emph{self-dual}, see 
Figure \ref{fig:0}.

  \begin{figure}[t]
  \begin{center}
 \begin{tabular}{c}
      \begin{minipage}{0.45\hsize}
        \begin{center}
   \includegraphics[width=0.6\textwidth]{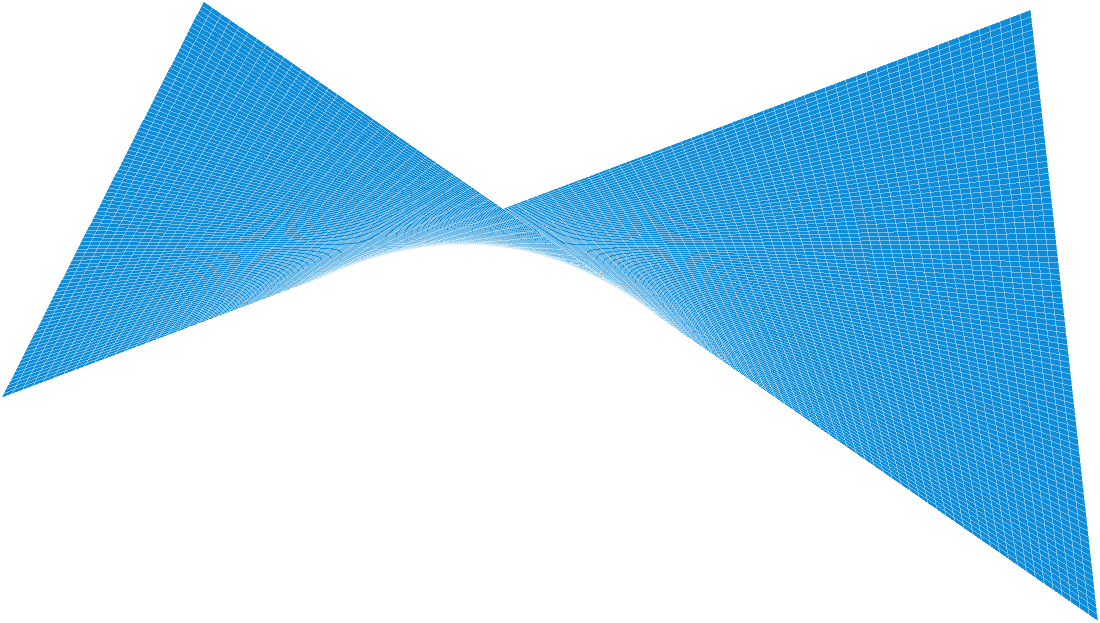}
        \end{center}
      \end{minipage}

      \begin{minipage}{0.55\hsize}
        \begin{center}
   \includegraphics[width=0.5\textwidth]{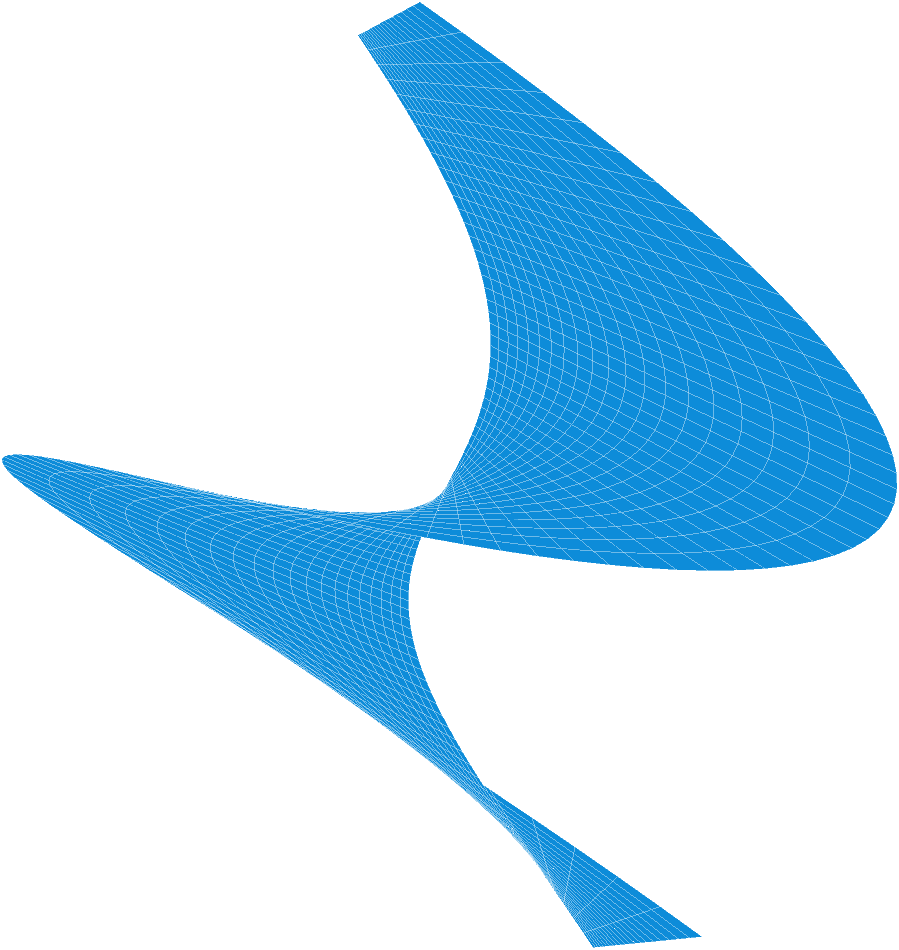}
        \end{center} 
      \end{minipage}
\end{tabular}
\caption{A hyperbolic paraboloid (left) and a helicoid (right). The both surfaces are self-dual.
}
    \label{fig:0}
  \end{center}
\end{figure}
 \subsection*{Smyth-type surface and its dual}
Consider the holomorphic potential
\[
\xi_- = \lambda^{-1} \begin{pmatrix} 0 & 1 \\ z^k & 0 \end{pmatrix} dz,\quad (k \in \mathbb{Z}_+),
\]
which generates a class of minimal surfaces referred to as \emph{Smyth-type surfaces} in $\mathrm{Nil}_3$. The Abresch-Rosenberg differential in this case vanishes at $z = 0$, and as a result, the corresponding dual surface $f_+^\lambda$ exhibits a \emph{branch point} at the origin, see Figure \ref{fig:1}.

 \begin{figure}[tbp]
\begin{tabular}{c}
      \begin{minipage}{0.5\hsize}
        \begin{center}
 \includegraphics[width=0.7\textwidth]{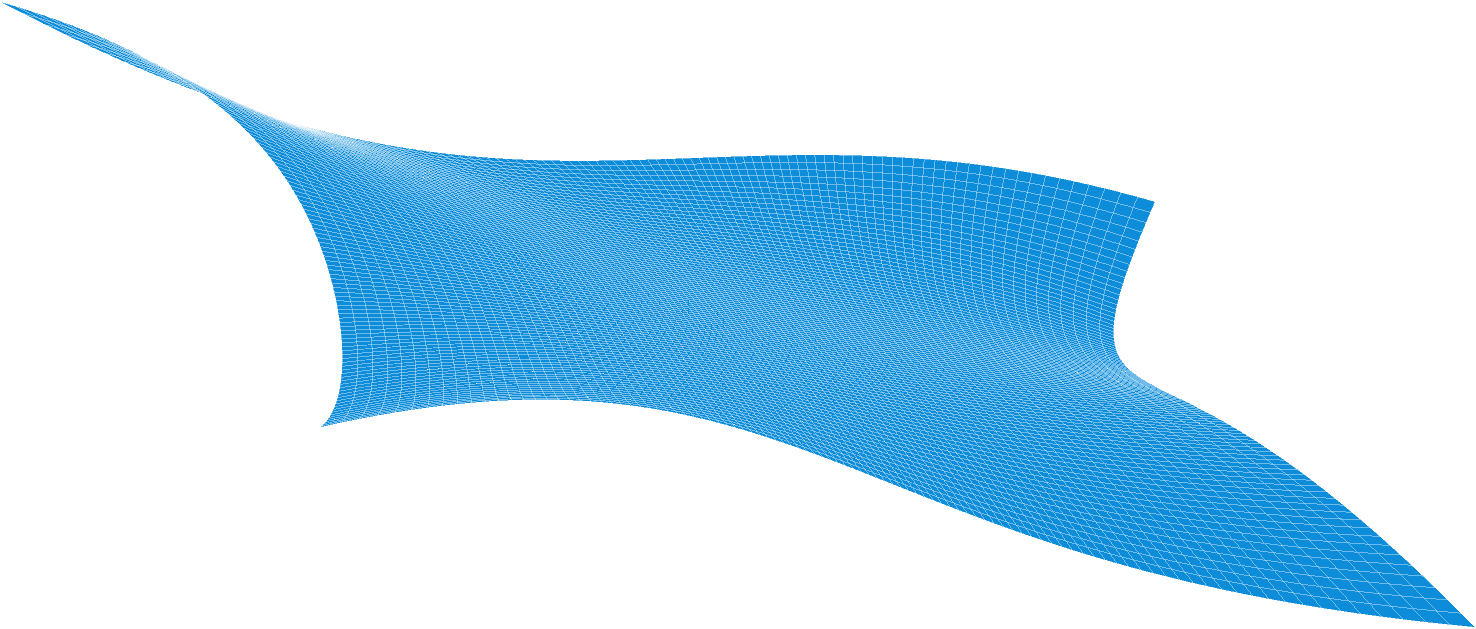}
        \end{center}
      \end{minipage}

      \begin{minipage}{0.5\hsize}
        \begin{center}
 \includegraphics[width=0.5\textwidth]{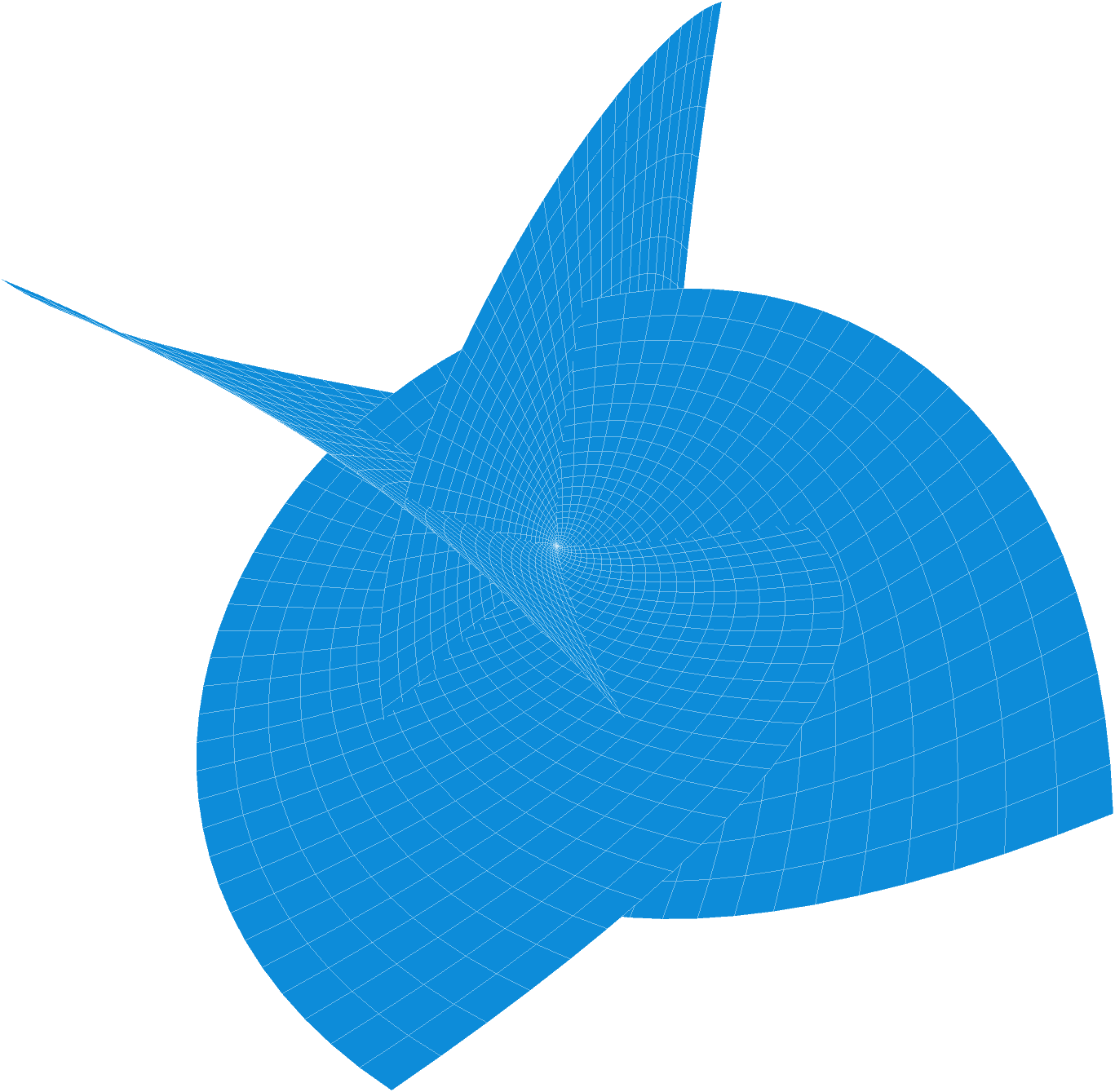}        
        \end{center} 
      \end{minipage}
\end{tabular}
\caption{A Smyth type surface (left) and its dual (right). A branch point at the origin on the right picture 
 appears due to the vanishing of the Abresch-Rosenberg differential.
 Figures generated using software by Brander \cite{Br:Matlab}.}
 \label{fig:1}
\end{figure}

\bibliographystyle{plain}
\def\cprime{$'$}

\end{document}